\documentclass[11pt,reqno]{amsart}
\usepackage{amsmath,amssymb,amsthm,amsfonts,geometry,hyperref}
\newtheorem{theorem}{Theorem}[section]
\newtheorem{lemma}[theorem]{Lemma}

\theoremstyle{definition}
\newtheorem{definition}[theorem]{Definition}
\newtheorem{remark}[theorem]{Remark}

\begin{document}

\title[Local Reflexivity and Duality]{Local Reflexivity and Duality for Subspace Approximation Schemes}

\author{Daniel Akech Thiong}
\address{Department of Mathematics, Claremont Graduate University, 710 N. College Avenue, Claremont, CA 91711, USA}
\email{daniel.akech@cgu.edu}

\subjclass[2020]{Primary 46B28, 47B06; Secondary 46B07, 41A65}
\keywords{Approximation spaces, s-numbers, Principle of Local Reflexivity, Q-compactness}

\begin{abstract}
We address a structural gap in the quantitative theory of operator approximation by investigating how subspace approximation schemes interact with local reflexivity. Recent literature establishing the duality of scheme-relative approximation numbers, $a_n(T,Q) = a_n(T^{**}, Q^{\perp\perp})$, has relied on imposing an Extended Local Reflexivity Property (ELRP) as an independent axiom. In this note, we establish a dichotomy. For infinite-dimensional admissible schemes, we prove they naturally generate complete nests, perfectly satisfying the topological hypotheses of the Oja-Veidenberg Nest Principle of Local Reflexivity. Conversely, we demonstrate that for approximation schemes with finite-dimensional components, the bidual geometry collapses. Consequently, the ELRP is entirely superfluous, and the duality of approximation numbers holds unconditionally for all bounded linear operators via an elementary isometric restriction. 
\end{abstract}

\maketitle

\section{Introduction}
Let $X$ and $Y$ be Banach spaces and $T \in \mathcal{L}(X, Y)$. The classical theorem of Schauder states that $T$ is compact if and only if its adjoint $T^*$ is compact. To quantify the degree of compactness, one utilizes approximation numbers $a_n(T)$, which measure the optimal approximation of $T$ by finite-rank operators. Hutton established that for compact operators, $a_n(T) = a_n(T^*)$ \cite{Hutton1974}. 

Aksoy and Thiong recently extended this quantitative duality to operators that are compact only with respect to a generalized approximation scheme $Q = (Q_n)_{n=0}^\infty$, defining approximation numbers $a_n(T, Q)$ relative to the scheme \cite{AT2023}. However, their proof establishing the duality $a_n(T,Q) = a_n(T^{**}, Q^{\perp\perp})$ required assuming an Extended Local Reflexivity Property (ELRP) for the pair $(X, Q_n)$ as an ad-hoc axiom \cite{AT2023}. 

In this note, we refine and clarify this framework. We prove that generalized subspace approximation schemes naturally generate complete nests of closed subspaces, permitting the rigorous application of the Nest Principle of Local Reflexivity established by Oja and Veidenberg \cite{OV2017}. Furthermore, we show that for schemes built from finite-dimensional subspaces---the standard regime for discrete-spectrum operators---the reflexivity axiom is fundamentally redundant. This observation trivializes the lifting argument, yielding an unconditional duality theorem.

\section{Preliminaries and Definitions}

\begin{definition}[Generalized Approximation Scheme]
Following the foundational structures of Pietsch \cite{Pietsch1981}, let $X$ be a Banach space. For each $n \in \mathbb{N}$, let $Q_n = Q_n(X)$ be a family of \textbf{closed subspaces} of $X$ satisfying the following conditions:
\begin{enumerate}
    \item[(GA1)] $\{0\} = Q_0 \subset Q_1 \subset \dots \subset Q_n \subset \dots \subset X$.
    \item[(GA2)] $\lambda Q_n \subset Q_n$ for all $n \in \mathbb{N}$ and all scalars $\lambda$.
    \item[(GA3)] $Q_n + Q_m \subseteq Q_{n+m}$ for every $n, m \in \mathbb{N}$.
\end{enumerate}
Then $Q = (Q_n)_{n=0}^\infty$ is called a generalized approximation scheme on $X$.
\end{definition}

\begin{definition}[Scheme-Relative Approximation Numbers]
Given an approximation scheme $Q = (Q_n)_{n=0}^\infty$ on $X$ and an operator $T \in \mathcal{L}(X)$, the $n$-th approximation number $a_n(T,Q)$ with respect to this scheme is defined as \cite{AT2023}:
\begin{equation}
a_n(T, Q) = \inf \{ \|T - A\|_{\mathcal{L}(X)} : A \in \mathcal{L}(X), \, A(X) \subseteq Q_n \}.
\end{equation}
The corresponding bidual approximation number is:
\begin{equation}
a_n(T^{**}, Q^{\perp\perp}) = \inf \{ \|T^{**} - B\|_{\mathcal{L}(X^{**})} : B \in \mathcal{L}(X^{**}), \, B(X^{**}) \subseteq Q_n^{\perp\perp} \}.
\end{equation}
\end{definition}

\begin{definition}[Extended Local Reflexivity Property] \label{def:elrp}
Let $J: X \to X^{**}$ be the canonical injection. The pair $(X, Q_n)$ is said to possess the Extended Local Reflexivity Property (ELRP) if for each countable subset $C \subset X^{**}$, for each finite-dimensional subspace $F \subset J(Q_n)$, and each $\epsilon > 0$, there exists a continuous linear map $P: \mathrm{span}(F \cup C) \to X$ such that $\|P\| \le 1 + \epsilon$ and $P|_{C \cap X} = I$ \cite{AT2023}.
\end{definition}

\section{Subspace Schemes as Complete Nests}

To apply the theory of nest algebras, the family of subspaces must be topologically complete. 

\begin{definition}[Complete Nest \cite{OV2017}]
A nest $\mathcal{N}$ of closed subspaces of a Banach space $X$ is \textit{complete} if it contains $\{0\}$ and $X$, and is closed under arbitrary intersections and closures of arbitrary unions.
\end{definition}

\begin{lemma}[Completion of the Approximation Scheme] \label{lem:nest_completion}
Let $Q = (Q_n)_{n=0}^\infty$ be a generalized subspace approximation scheme on $X$. Define the limit space $Q_\infty = \overline{\bigcup_{n=0}^\infty Q_n}$. The augmented family $\mathcal{N}_Q = \{Q_n\}_{n=0}^\infty \cup \{Q_\infty\} \cup \{X\}$ forms a complete nest of closed subspaces of $X$.
\end{lemma}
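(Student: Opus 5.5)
The plan is to check the three defining requirements of a complete nest for $\mathcal{N}_Q$ in turn: it is a nest (totally ordered by inclusion), it contains $\{0\}$ and $X$, and it is closed under arbitrary intersections and under closures of arbitrary unions. Every member is a closed subspace: each $Q_n$ is closed by definition, $Q_\infty$ is the closure of an increasing union of subspaces, hence a closed subspace, and $X$ is trivially closed. Total ordering follows from (GA1): $Q_n\subseteq Q_m$ for $n\le m$, and every $Q_n\subseteq Q_\infty\subseteq X$. Also $\{0\}=Q_0$ and $X$ are in $\mathcal{N}_Q$ by construction. Axioms (GA2) and (GA3) are not needed here beyond ensuring we are dealing with subspaces.

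For intersections, take a nonempty subfamily $\mathcal{S}\subseteq\mathcal{N}_Q$. Because $\mathcal{N}_Q$ is a chain indexed by the well-ordered set $\{0,1,2,\dots\}\cup\{\infty\}\cup\{\text{top}\}$, order-preserving via $n\mapsto Q_n$, the set of indices occurring in $\mathcal{S}$ has a least element $\alpha$. Hence $\bigcap\mathcal{S}=N_\alpha\in\mathcal{N}_Q$. The empty intersection is interpreted as $X$, which lies in the family.

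For unions, take $\mathcal{S}\subseteq\mathcal{N}_Q$ and split into cases. If $\mathcal{S}$ contains $X$, the closure of the union is $X$. If $\mathcal{S}$ contains $Q_\infty$ but not $X$, the union is $Q_\infty$, which is already closed. If $\mathcal{S}\subseteq\{Q_n\}$ has bounded indices, the union is $Q_{\max}$, which is closed. If the indices are unbounded, the cofinality of the chain (GA1) gives $\bigcup\mathcal{S}=\bigcup_n Q_n$, whose closure is $Q_\infty$. The empty union has closure $\{0\}=Q_0$.

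The only real subtlety is this last case: the closure of an unbounded union must land exactly on $Q_\infty$. This is where $Q_\infty$ is added, since $\bigcup_n Q_n$ itself need not be closed. I expect that to be the main point to state carefully. I would also note that when the chain stabilizes, or when $Q_\infty=X$, the family simply has repetitions, and none of the arguments change.
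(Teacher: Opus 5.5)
Your proof is correct and is essentially the paper's argument: (GA1) gives the chain, the well-ordered index set gives a least member, which is the intersection, and a case split handles closures of unions, with the unbounded case closing up to $Q_\infty$. Yours is a bit more complete, since it explicitly covers the empty subfamily and the case where $Q_\infty$ (but not $X$) appears alongside only finitely many $Q_n$, both of which the paper's case analysis passes over.
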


\begin{proof}
By (GA1), the family is a chain ordered by inclusion, making it a nest containing $\{0\}$ and $X$. 

To verify closure under arbitrary intersections, let $\mathcal{N}' \subset \mathcal{N}_Q$ be a non-empty subfamily. Because $\mathcal{N}_Q$ consists of a countable increasing sequence appended with $Q_\infty$ and $X$, it is strictly well-ordered under reverse inclusion. Any non-empty subfamily $\mathcal{N}'$ trivially possesses a unique minimum element $Q_{\min} \in \mathcal{N}'$. Thus, $\bigcap_{Y \in \mathcal{N}'} Y = Q_{\min} \in \mathcal{N}_Q$.

To verify closure under closures of arbitrary unions, let $\mathcal{N}' \subset \mathcal{N}_Q$. If $X \in \mathcal{N}'$, the closure of the union is $X$. If $\mathcal{N}'$ is bounded within the sequence $(Q_n)$, it contains a maximum element $Q_{\max}$, and the closure of the union is $Q_{\max}$. If $\mathcal{N}'$ contains infinitely many $Q_n$ elements but not $X$, its union contains $\bigcup_{n=0}^\infty Q_n$. The norm closure of this union is precisely $Q_\infty \in \mathcal{N}_Q$. Thus, $\mathcal{N}_Q$ is topologically complete.
\end{proof}

\section{The Nest Principle of Local Reflexivity}

To apply Theorem 4.4 from Oja and Veidenberg \cite{OV2017}, the biorthogonal structures in the bidual must be rigorously verified. 

\begin{definition}[Admissible Scheme] \label{def:admissible}
A generalized approximation scheme $Q$ is \textit{admissible} if its completion satisfies the weak-star bidual limit identity: $\overline{\bigcup_{n=0}^\infty Q_n^{\perp\perp}}^{w^*} = Q_\infty^{\perp\perp}$ in $X^{**}$.
\end{definition}

\begin{lemma}[Admissibility of Finite-Dimensional Schemes] \label{lem:finite_dim_admissible}
If each $Q_n$ is finite-dimensional, then $Q$ is unconditionally admissible.
\end{lemma}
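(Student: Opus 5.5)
The plan is to use finite-dimensionality to identify each bidual component $Q_n^{\perp\perp}$ concretely with the canonical copy $J(Q_n)$. The union $\bigcup_n Q_n^{\perp\perp}$ is then just $J\bigl(\bigcup_n Q_n\bigr)$, and the weak-star limit identity of Definition~\ref{def:admissible} reduces to a Goldstine/bipolar statement about the single closed subspace $Q_\infty$. So the argument runs through the concrete sets $J(Q_n)$, with no abstract manipulation of $\bigcup_n Q_n^{\perp\perp}$.

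\emph{Step 1 (collapse of the bidual components).} Fix $n$ with $\dim Q_n<\infty$. By the bipolar theorem, $Q_n^{\perp\perp}$ is the weak-star closure of $J(Q_n)$ in $X^{**}$. Since $J(Q_n)$ is a finite-dimensional subspace, it is weak-star closed: finite-dimensional subspaces are closed in any Hausdorff linear topology. Hence $Q_n^{\perp\perp}=J(Q_n)$. As a cross-check, $Q_n^{\perp\perp}\cong (X^*/Q_n^{\perp})^{*}\cong Q_n^{**}$ has dimension $\dim Q_n$, and it contains $J(Q_n)$. This is the only place the hypothesis enters, and it gives
$$\bigcup_{n} Q_n^{\perp\perp}=J\Bigl(\bigcup_n Q_n\Bigr).$$

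\emph{Step 2 (inclusion $\subseteq$).} Since $Q_n\subset Q_\infty$, we have $J(Q_n)\subset J(Q_\infty)\subset Q_\infty^{\perp\perp}$. The annihilator $Q_\infty^{\perp\perp}$ is weak-star closed, so it contains the weak-star closure of the union.

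\emph{Step 3 (inclusion $\supseteq$).} By the bipolar theorem again, $Q_\infty^{\perp\perp}=\overline{J(Q_\infty)}^{w^*}$. Because $J$ is an isometry and $Q_\infty=\overline{\bigcup_n Q_n}$ in norm, we get $J(Q_\infty)\subset\overline{J(\bigcup_n Q_n)}^{\|\cdot\|}$. The norm closure is contained in the weak-star closure, so by Step 1,
$$Q_\infty^{\perp\perp}\subset \overline{J\Bigl(\bigcup_n Q_n\Bigr)}^{w^*}=\overline{\bigcup_n Q_n^{\perp\perp}}^{w^*}.$$
Combining Steps 2 and 3 yields the identity, so $Q$ is admissible with no further assumption on $X$.

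The main point needing care is Step 1, namely the identification $Q_n^{\perp\perp}=J(Q_n)$. I would verify it via the weak-star closedness of finite-dimensional subspaces rather than via reflexivity of $Q_n$ alone, to avoid conflating $Q_n^{**}$ with a subspace of $X^{**}$. One also has to check that annihilators are taken in the pairing $\langle X^{**},X^*\rangle$, which gives $Q_\infty^{\perp}=\bigcap_n Q_n^{\perp}$.
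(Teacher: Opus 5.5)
Your proof is correct and follows essentially the same route as the paper. Both arguments collapse $Q_n^{\perp\perp}$ to $J(Q_n)$ and then use the bipolar theorem to identify $\overline{J(\bigcup_n Q_n)}^{w^*}$ with $Q_\infty^{\perp\perp}$; the paper does the second step in one line via $Z^{\perp\perp}=\overline{Z}^{\perp\perp}$ for $Z=\bigcup_n Q_n$, where you split it into two inclusions, and your weak-star-closedness justification of Step 1 is more careful than the paper's appeal to reflexivity.

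One remark: Step 1 is dispensable. The chain $J(Q_n)\subseteq Q_n^{\perp\perp}\subseteq Q_\infty^{\perp\perp}$ holds for every closed subspace $Q_n$, so Steps 2 and 3 (with the final equality in Step 3 weakened to an inclusion) already prove admissibility for every scheme, and finite-dimensionality is never actually used.
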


\begin{proof}
Because each $Q_n$ is finite-dimensional, it is canonically identifiable with its bidual: $Q_n^{\perp\perp} = J(Q_n)$. Taking the union yields $\bigcup_{n=0}^\infty Q_n^{\perp\perp} = J\left(\bigcup_{n=0}^\infty Q_n\right)$. By the bipolar theorem, the weak-star closure of $J(Z)$ in $X^{**}$ is exactly $Z^{\perp\perp}$. Letting $Z = \bigcup_{n=0}^\infty Q_n$, we obtain $\overline{\bigcup_{n=0}^\infty Q_n^{\perp\perp}}^{w^*} = Z^{\perp\perp}$. Since $Q_\infty = \overline{Z}^{\|\cdot\|}$, we have $Z^{\perp\perp} = Q_\infty^{\perp\perp}$.
\end{proof}

\begin{theorem}[Nest PLR for Admissible Subspace Schemes] \label{thm:nest_plr}
Let $X$ be a Banach space and $Q = (Q_n)_{n=0}^\infty$ be an \textit{admissible} approximation scheme on $X$. For any finite-rank operator $S \in \mathcal{F}(X^{**})$ such that $S(U^{\perp\perp}) \subset U^{\perp\perp}$ for all $U \in \mathcal{N}_Q$, any compact subsets $K \subset X^{**}$ and $L \subset X^*$, and any $\epsilon > 0$, there exists an operator $T \in \mathcal{F}(X)$ satisfying $T(U) \subset U$ for all $U \in \mathcal{N}_Q$ such that:
\begin{enumerate}
    \item $|\|T\| - \|S\|| < \epsilon$,
    \item $|x^{**}(T^* y^*) - (S x^{**})(y^*)| < \epsilon$ for all $x^{**} \in K$ and $y^* \in L$.
\end{enumerate}
\end{theorem}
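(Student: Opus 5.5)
The plan is to derive the statement as a direct specialization of Theorem 4.4 of Oja and Veidenberg \cite{OV2017}. That result says the following. Let $\mathcal{N}$ be a complete nest of closed subspaces of $X$. Let $S$ be a finite-rank operator on $X^{**}$ that leaves invariant every member of the bidual nest $\mathcal{N}^{\perp\perp}=\{U^{\perp\perp}: U\in\mathcal{N}\}$. Then $S$ can be approximated, in the sense of conditions (1) and (2) (norm control plus weak-star compatibility on compacta $K\subset X^{**}$, $L\subset X^*$), by finite-rank operators $T$ on $X$ that leave every $U\in\mathcal{N}$ invariant. So the proof amounts to checking that $\mathcal{N}_Q$ and its bidual image meet the hypotheses of that theorem, and then reading off the conclusion.

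The steps, in order, are as follows.

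First, invoke Lemma~\ref{lem:nest_completion} to conclude that $\mathcal{N}_Q$ is a complete nest containing $\{0\}$ and $X$.

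Second, check that the map $U\mapsto U^{\perp\perp}$ carries $\mathcal{N}_Q$ to a nest in $X^{**}$ with the limit structure Oja--Veidenberg require. It is monotone because $U\subset V$ implies $U^{\perp\perp}\subset V^{\perp\perp}$. Intersections pose no difficulty, since by the well-ordering argument of Lemma~\ref{lem:nest_completion} every intersection is attained at a minimal member. For unions, the only nontrivial limit point of $\mathcal{N}_Q$ is $Q_\infty$, and the relevant closure in $X^{**}$ is the weak-star closure. Thus the required identity is exactly $\overline{\bigcup_n Q_n^{\perp\perp}}^{w^*}=Q_\infty^{\perp\perp}$, which is the admissibility hypothesis of Definition~\ref{def:admissible}. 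The other unions are trivial: bounded subfamilies have a maximum, and subfamilies containing $X$ give $X^{\perp\perp}=X^{**}$.

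Third, with the hypotheses verified, apply OV Theorem 4.4 to $S$, $K$, $L$, $\epsilon$. This produces $T\in\mathcal{F}(X)$ with $T(U)\subset U$ for all $U\in\mathcal{N}_Q$ and satisfying (1) and (2). Condition (2) needs one remark: OV phrase it via $T^{**}$ or $T^*$ acting against $K$ and $L$, and $x^{**}(T^*y^*)=(T^{**}x^{**})(y^*)$, so the two formulations agree.

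The main obstacle is the second step. I have to pin down precisely which form of "completeness in the bidual" Oja--Veidenberg's proof actually uses. Their argument runs through a compactness/convexity argument (a Lindenstrauss--Rosenthal type principle) in the space of nest-invariant finite-rank operators, and it needs the weak-star closures of the invariant subspaces to behave well at limit ordinals of the nest. My first move is to confirm that, for a nest of order type $\omega+2$ as here, the only limit element is $Q_\infty$, so admissibility supplies exactly the missing identity. If OV's theorem is stated only for nests whose annihilator nest $\{U^\perp\}$ is complete in $X^*$, I would instead verify that: $\bigcap_n Q_n^\perp=Q_\infty^\perp$ holds automatically, and the weak-star closure condition on unions of the $Q_n^{\perp\perp}$ is equivalent, by taking annihilators, to admissibility.
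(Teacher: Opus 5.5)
Your proposal matches the paper's proof. Both reduce the statement to Oja--Veidenberg's Theorem 4.4 by the same three checks: Lemma~\ref{lem:nest_completion} gives completeness of $\mathcal{N}_Q$; the monotone map $U\mapsto U^{\perp\perp}$ preserves the chain, so intersections in the bidual nest are attained at a minimal member; and admissibility supplies closure of the bidual nest under weak$^*$-closures of unions. The paper does not pin down the exact form of Oja--Veidenberg's hypotheses any more precisely than you do (it simply sets $\mathcal{U}=\mathcal{N}_Q$, $V_U=U$ and asserts the conditions hold), so the obstacle you flag is settled there in the same way and your argument is at the same level of completeness.
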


\begin{proof}
We verify the topological hypotheses of Oja and Veidenberg \cite[Theorem 4.4]{OV2017}. Setting $\mathcal{U} = \mathcal{N}_Q$ and $V_U = U$, the nest $\mathcal{N}_{\mathcal{U}}$ is trivially increasing on $\mathcal{U}$. Because taking bipolars preserves the chain structure, the biorthogonal nest $\mathcal{N}_{\mathcal{U}}^{\perp\perp}$ is closed under arbitrary intersections. Finally, the structural assumption of admissibility explicitly guarantees that $\mathcal{U}^{\perp\perp}$ is closed under the closures of arbitrary unions, rendering the bidual nest complete. Having strictly met all conditions, the existence of $T$ follows directly from \cite[Theorem 4.4]{OV2017}.
\end{proof}

While Theorem \ref{thm:nest_plr} establishes the overarching theoretical framework required for infinite-dimensional admissible schemes, the following section demonstrates that practical applications involving finite-dimensional schemes can bypass this heavy machinery entirely.

\section{The Finite-Dimensional Collapse and Unconditional Duality}

While Theorem \ref{thm:nest_plr} provides powerful $\epsilon$-approximate control over infinite-dimensional admissible nests, the literature has historically assumed the Extended Local Reflexivity Property (Definition \ref{def:elrp}) to execute lifting arguments. We now demonstrate that for finite-dimensional schemes, this assumption is an elementary tautology, rendering the entire ELRP machinery redundant.

\begin{theorem}[Unconditional Duality for Finite-Dimensional Schemes] \label{thm:duality}
Let $X$ be a Banach space equipped with a generalized subspace approximation scheme $Q = (Q_n)_{n=0}^\infty$ where each $Q_n$ is finite-dimensional. For any bounded linear operator $T \in \mathcal{L}(X)$, the approximation numbers satisfy unconditionally:
\begin{equation}
a_n(T, Q) = a_n(T^{**}, Q^{\perp\perp}).
\end{equation}
\end{theorem}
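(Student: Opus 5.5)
The plan is to prove the two inequalities separately. The only structural input is the identity $Q_n^{\perp\perp} = J(Q_n)$, already used in the proof of Lemma~\ref{lem:finite_dim_admissible}. I first justify it directly. $J(Q_n)$ is a finite-dimensional subspace of $X^{**}$, hence weak-star closed. By the bipolar theorem, $Q_n^{\perp\perp}$ is the weak-star closure of $J(Q_n)$. So the two coincide, and every admissible competitor $B$ in the bidual problem has range inside $J(Q_n)$. This is exactly the collapse that makes the ELRP unnecessary.

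\emph{Step 1: $a_n(T^{**},Q^{\perp\perp}) \le a_n(T,Q)$.} Let $A \in \mathcal{L}(X)$ with $A(X) \subseteq Q_n$. Then $A$ has finite rank. Its bidual $A^{**}$ maps $X^{**}$ into the weak-star closure of $J(A(X))$, because $A^{**}$ is weak-star continuous and $J(X)$ is weak-star dense in $X^{**}$ by Goldstine's theorem. Hence $A^{**}(X^{**}) \subseteq J(Q_n) = Q_n^{\perp\perp}$, so $A^{**}$ is admissible in the bidual infimum. Since taking adjoints twice is isometric,
\[
\|T^{**} - A^{**}\| = \|T - A\|.
\]
Taking the infimum over $A$ gives the inequality.

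\emph{Step 2: $a_n(T,Q) \le a_n(T^{**},Q^{\perp\perp})$.} This is the lifting step that previously required the ELRP; here it becomes an isometric restriction. Let $B \in \mathcal{L}(X^{**})$ with $B(X^{**}) \subseteq Q_n^{\perp\perp} = J(Q_n)$. Define
\[
A = J^{-1} B J : X \to Q_n \subseteq X.
\]
This is well defined because $J$ is injective. It is bounded because $J^{-1}$ is an isometry from $J(Q_n)$ onto $Q_n$, so $\|A\| \le \|B\|$. Using $T^{**}J = JT$ and that $J$ is an isometry, for $\|x\| \le 1$ we get
\[
\|Tx - Ax\| = \|JTx - BJx\| = \|(T^{**} - B)Jx\| \le \|T^{**} - B\|.
\]
Hence $\|T - A\| \le \|T^{**} - B\|$, and taking the infimum over $B$ gives the reverse inequality.

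I expect the main point requiring care to be the range computations, namely $Q_n^{\perp\perp} = J(Q_n)$ and $A^{**}(X^{**}) \subseteq Q_n^{\perp\perp}$. Both reduce to weak-star closedness of finite-dimensional subspaces together with weak-star continuity of biduals. Once these are in place, no approximation or $\epsilon$-argument is needed, and no hypothesis on $T$ is used. This is why the duality holds unconditionally for all $T \in \mathcal{L}(X)$.
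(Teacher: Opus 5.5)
Your proof is correct and follows the paper's argument essentially step for step: the forward inequality passes from $A$ to $A^{**}$ and uses $\|T^{**}-A^{**}\| = \|T-A\|$, and the reverse inequality uses the restriction $A = J^{-1}BJ$, which is made possible by $Q_n^{\perp\perp} = J(Q_n)$. The differences are cosmetic. You justify $Q_n^{\perp\perp}=J(Q_n)$ via weak-star closedness of $J(Q_n)$ and the bipolar theorem, and the range of $A^{**}$ via Goldstine, where the paper simply asserts both. You also take the infimum over $B$ directly instead of running the paper's $\epsilon$-argument.
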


\begin{proof}
The forward inequality $a_n(T^{**}, Q^{\perp\perp}) \le a_n(T, Q)$ is a standard consequence of the canonical embedding. If $A \in \mathcal{L}(X)$ satisfies $A(X) \subseteq Q_n$, then its bitranspose satisfies $A^{**}(X^{**}) \subseteq Q_n^{\perp\perp}$. Since $\|T^{**} - A^{**}\| = \|T - A\|$, any valid approximant for $T$ induces a valid approximant for $T^{**}$, yielding the inequality.

We now establish the reverse inequality, $a_n(T, Q) \le a_n(T^{**}, Q^{\perp\perp})$, without utilizing local reflexivity. Let $\epsilon > 0$. By definition, there exists an operator $B \in \mathcal{L}(X^{**})$ such that $B(X^{**}) \subseteq Q_n^{\perp\perp}$ and $\|T^{**} - B\| < a_n(T^{**}, Q^{\perp\perp}) + \epsilon$.

Because $Q_n$ is finite-dimensional, it is reflexive, and thus $Q_n^{\perp\perp} = J(Q_n)$ where $J: X \to X^{**}$ is the canonical isometry. Consequently, the range of the bidual approximant is strictly constrained:
\begin{equation}
B(X^{**}) \subseteq J(Q_n) \subset J(X).
\end{equation}
This allows us to define an operator $A: X \to X$ by strict isometric restriction: $A = J^{-1} B J$. The operator $A$ is well-defined, linear, and bounded. Furthermore, its range satisfies $A(X) = J^{-1}(B(J(X))) \subseteq J^{-1}(J(Q_n)) = Q_n$. Therefore, $A$ is a valid approximant for $T$ with respect to the scheme $Q$.

We compute the norm of the approximation error. For any $x \in X$ with $\|x\| \le 1$:
\begin{equation}
\|Tx - Ax\|_X = \|J(Tx - Ax)\|_{X^{**}} = \|T^{**}Jx - BJx\|_{X^{**}} \le \|T^{**} - B\|_{\mathcal{L}(X^{**})} \|Jx\|_{X^{**}}.
\end{equation}
Taking the supremum over the unit ball yields $\|T - A\|_{\mathcal{L}(X)} \le \|T^{**} - B\|_{\mathcal{L}(X^{**})}$. Thus, we have found an operator $A \in \mathcal{L}(X)$ such that $A(X) \subseteq Q_n$ and:
\begin{equation}
\|T - A\| \le \|T^{**} - B\| < a_n(T^{**}, Q^{\perp\perp}) + \epsilon.
\end{equation}
Since $\epsilon > 0$ is arbitrary, $a_n(T, Q) \le a_n(T^{**}, Q^{\perp\perp})$, concluding the proof.
\end{proof}

\section{Illustration: Discrete-Spectrum Differential Operators}

The finite-dimensional collapse demonstrated in Theorem \ref{thm:duality} natively captures the approximation schemes generated by discrete-spectrum differential operators. Consider the Legendre differential operator on $[-1, 1]$: $P(D) = \frac{d}{dx}(1-x^2)\frac{d}{dx}$. 

This operator is self-adjoint, and its spectrum features discrete eigenvalues $\lambda(n) = n(n+1)$ \cite{Chen1997}. The corresponding eigenspaces $H_{\lambda(n)}$, spanned by the Legendre polynomials, are strictly finite-dimensional. If we define the approximation scheme by the sequence of closed subspaces $Q_n = \mathrm{span} \bigcup_{k=0}^n H_{\lambda(k)}$, every subspace $Q_n$ in the scheme remains finite-dimensional. 

Consequently, any bounded operator acting within this framework unconditionally satisfies the duality $a_n(T, Q) = a_n(T^{**}, Q^{\perp\perp})$. The heavy reflexivity axioms previously assumed in the literature are fundamentally unnecessary in this practical regime.

\begin{remark}[Riesz Means and $Q$-Compatibility]
This unconditional duality applies directly to the foundational smoothing operators of harmonic analysis. For the scheme defined above, consider the delayed Riesz means of order $b > 0$, given by the spectral multiplier $R_\mu^b f = \sum_{\lambda(k) \le \mu} (1 - \lambda(k)/\mu)^b P_{\lambda(k)}f$ \cite{Chen1997}. For any fixed threshold $\mu$, there exists a maximum index $N$ such that $\lambda(N) \le \mu$, ensuring the range satisfies $R_\mu^b(X) \subseteq Q_N$. For $n \ge N$, the operator is fully absorbed by the scheme, resulting in the trivial exact duality $a_n(R_\mu^b, Q) = a_n((R_\mu^b)^{**}, Q^{\perp\perp}) = 0$.
\end{remark}

\end{document}